\documentclass{article}
\usepackage{amsmath, amsfonts, amsthm}
\theoremstyle{plain}\newtheorem{lemma}{Lemma}
\theoremstyle{plain}
\theoremstyle{plain}\newtheorem{thm}{Theorem}
\theoremstyle{plain}\newtheorem{prop}{Proposition}
\title{Zeros of $2\times2$ Matrix Polynomials}
\author{Marla Slusky\footnote{This paper represents results obtained 
at a Research Experience for Undergraduates program at Rutgers University. 
The author thanks her mentor, Robert Wilson, for his ideas and input.}}

\begin{document}
\newcommand{\N}{\mathbb{N}}
\newcommand{\C}{\mathbb{C}}
\maketitle
\section*{Abstract}
Consider the $n$th degree polynomial equation, 
$X^n+A_{n-1}X^{n-1}+\cdots+A_1X+A_0=0$ over the ring of 
$2\times2$ complex matrices. 
If this equation has more than $\binom{2n}{2}$ solutions, then it 
has infinitely many solutions. We show here that for any 
$n,m \in\N$ such that $m\leq\binom{2n}{2}$, there 
exists an $n$th degree polynomial equation with exactly $m$ 
solutions.
\section{Introduction}
Let $R$ be a ring and let
\begin{equation*}
f(X)=X^n+A_{n-1}X^{n-1}+\dots+A_1X+A_0
\end{equation*}
be a polynomial in the indeterminate $X$ with coefficients in $R$
(with powers of $X$ on the right side of the coefficients).
It is well known that if $R$ is a field, the number of solutions in 
$R$ of $f(X)=0$ is $\leq n$. If $R$ is a division ring, then (\cite{BW})
the number of solutions in $R$ of $f(X)=0$ is either $\leq n$ or 
infinite. In case $R$ is the ring of $k\times k$ matrices over $\C$, 
a number of authors (e.g., \cite{FS}, \cite{G}, \cite{LR}, \cite{GLR})
have studied solutions of $f(X)=0$. In particular, Fuchs
and Schwartz have shown that in the generic case, $f(X)=0$ has 
$\binom{kn}{k}$ diagonalizable solutions.

In this paper, we consider the case where $R$ is the ring of $2\times 2$
matrices over the complex numbers, $M_2(\C)$. It is true
(Proposition \ref{inf}) that if the equation $f(X)=0$ has more than
$\binom{2n}{2}$ solutions then it has infinitely many. The main result
of this paper is that every number of solutions $\leq \binom{2n}{2}$ can
arise. That is, 
\begin{thm}\label{main}
Given $m,n\in\N$, $m\leq \binom{2n}{2}$, there exists an $n$th degree equation
over $2\times2$ complex matrices that has exactly $m$ solutions.
\end{thm}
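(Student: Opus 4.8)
The plan is to pass to the standard linearization. To $f$ one associates the $2n\times 2n$ block-companion matrix $C$, and then $X\mapsto W_X=\{(v,Xv,\dots,X^{n-1}v):v\in\C^2\}$ is a bijection from the solutions of $f(X)=0$ onto the $2$-dimensional $C$-invariant subspaces $W$ with $W\cap\ker\pi_1=0$, where $\pi_1$ is the projection onto the first $\C^2$ block. I would run the counting through the scalar identity $f(X)v=F(\mu)v$, valid whenever $Xv=\mu v$, where $F(\mu)=\mu^nI+\mu^{n-1}A_{n-1}+\cdots+A_0$: thus every eigenvalue of a solution is a root of the monic degree-$2n$ polynomial $\det F$; a diagonalizable solution amounts to a choice of two distinct roots $\mu,\mu'$ of $\det F$ together with independent vectors in $\ker F(\mu)$ and $\ker F(\mu')$; a non-diagonalizable solution $X=\mu I+N$ amounts to a root $\mu$ with $\ker F(\mu)=\langle v\rangle$ and $0\neq F'(\mu)v\in\operatorname{Im}F(\mu)$; and a scalar solution $\mu I$ would force $F(\mu)=0$, which I will prevent. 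At a simple root of $\det F$ there is no non-diagonalizable solution, so the generic picture is exactly the Fuchs--Schwartz one.

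The examples are built in overlapping ranges according to the size of $m$. Near the top value $\binom{2n}{2}$ I would choose $F$ so that $\det F$ has $2n$ distinct roots, $F$ has rank one at each, and the $2n$ kernel lines fall into classes of prescribed sizes $m_1,\dots,m_r$; this is available for any pattern with every $m_s\le n$ (the vector polynomial $\mu\mapsto F(\mu)v$ has degree $n$, so no more than $n$ kernel lines can coincide, and there is enough freedom in the $A_i$ to realize any admissible pattern), and then the solution count is exactly $\binom{2n}{2}-\sum_s\binom{m_s}{2}$. For small $m$ I would instead arrange that $C$ is cyclic with very few distinct eigenvalues, so that it has only a few invariant $2$-planes — a single one when $C=J_{2n}(\lambda)$ — and would tune which of these are transverse to $\ker\pi_1$ (adjusting the $A_i$ without changing the similarity class of $C$, which in general forces $F$ to be non-triangular) to bring the solution count down to any prescribed small value, in particular $m=1$; that every cyclic $C$ occurs is clear, since $F=\begin{pmatrix}p&1\\0&q\end{pmatrix}$ with $p,q$ monic of degree $n$ has invariant factors $1$ and $pq$. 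For the intermediate range I would interpolate with $F=\begin{pmatrix}p&r\\0&q\end{pmatrix}$, $\deg r<n$, whose $p,q$ carry a prescribed mix of simple and repeated roots: the solutions are then the diagonalizable ones (pairs of roots of $\det F$ with independent kernel lines) together with one non-diagonalizable solution at each repeated root where $0\neq F'(\mu)v\in\operatorname{Im}F(\mu)$, and this last condition is switched on or off, root by root, by the choice of $r$, which is the knob that lets the total land on each remaining integer.

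The hard part is the simultaneous bookkeeping. One has to choose the root multiplicities of $p$ and $q$, the ranks of $F$ at these roots, the kernel and image lines, and the polynomial $r$ all at once so that the diagonalizable count is as intended, the non-diagonalizable solutions occur at exactly the intended roots and nowhere else, no scalar solution appears, and — most importantly — no positive-dimensional family of solutions is created, since by Proposition~\ref{inf} such a family would force infinitely many; concretely this demands that no eigenvalue of $C$ carry two Jordan blocks of combined size $\ge 3$, equivalently that no three kernel lines coincide and no unintended rank drop of $F$ occur. Establishing that all these data can be prescribed independently by the free entries of the $A_i$ is the technical heart of the argument. What then remains is the purely combinatorial check that the union of the attainable lists — the numbers $\binom{2n}{2}-\sum_s\binom{m_s}{2}$, the small values produced by a cyclic $C$ with few eigenvalues, and the interpolating values supplied by the non-diagonalizable switch — exhausts $\{1,2,\dots,\binom{2n}{2}\}$, with the degenerate cases $n=1,2$ dispatched by direct computation.
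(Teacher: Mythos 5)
Your proposal is an outline whose two load-bearing steps are exactly the ones you defer, so as it stands it has genuine gaps rather than a proof. First, the realization step: you assert that ``there is enough freedom in the $A_i$'' to prescribe the roots of $\det F$, the rank-one degeneracies, and the pattern of coinciding kernel lines, but you never prove it, and this is where the actual work lies. The paper proves precisely this statement (its Lemma \ref{entries}) by a concrete device: it takes critical vectors $v_i=\left[\begin{smallmatrix}1\\y_i\end{smallmatrix}\right]$, critical values $\lambda_i$ with $\lambda_i^n=y_i$, concentrates all leftover multiplicity at the critical value $0$ (Lemma \ref{values}, which also rules out a two-dimensional $0$-critical space and nilpotent solutions), and then solves for the entries of the $A_i$ from a linear system whose coefficient matrix reduces to a nonsingular Vandermonde matrix. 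Without some such explicit argument, ``enough freedom'' is not established, and neither is the absence of unintended rank drops, two-dimensional critical spaces, or extra non-diagonalizable solutions, any of which would wreck the count or produce infinitely many solutions.

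Second, the exhaustion step is also only asserted, and your top-range family cannot carry the load you assign to it: with $2n$ distinct roots and kernel-line classes of sizes $m_1,\dots,m_r\le n$, the attainable counts are $\binom{2n}{2}-\sum_s\binom{m_s}{2}$ with $\sum_s m_s=2n$, which never goes below $n^2$ and, even above $n^2$, only hits deficits expressible as such sums. The paper's way around this is structural: it chooses $p$ with $\binom{p-1}{2}<m\le\binom{p}{2}$, uses only $p$ distinct critical values (giving $0$ multiplicity $2n-p+1$), so the needed deficit $\binom{p}{2}-m$ is less than $p-1$ and can be realized by an equivalence relation with classes of size at most four (Lemma \ref{vectors}); and even then the combinatorics genuinely fails for $m=4$ and $m=16$, which require separate explicit diagonal examples (Section \ref{cases}). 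Your plan detects no exceptional values and instead routes all small and intermediate $m$ through a ``cyclic $C$ / non-diagonalizable switch'' mechanism that is not developed: you would have to prove that at a chosen repeated root exactly one non-diagonalizable solution appears and none appears elsewhere, that no scalar solution or infinite family is created, and then still verify the purely arithmetic claim that the union of your lists covers every $m\le\binom{2n}{2}$. (Your stated criterion $0\neq F'(\mu)v\in\operatorname{Im}F(\mu)$ for a non-diagonalizable solution is itself unproved.) The paper avoids this entire layer by arranging that all solutions are diagonalizable and counted simply by independent pairs of critical vectors; to complete your route you would need to supply both the realization lemma and the case analysis you have postponed.
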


The paper is organized as follows. 
Section \ref{solving} recalls (from \cite{FS}) 
a general method of solving such equations and proves that
if there are more than $\binom{2n}{2}$ solutions, there are 
infinitely many.
Section \ref{proof} provides a general
algorithm for finding an $n$th degree equation with exactly 
$m$ solutions provided that $m\leq\binom{2n}{2}$, except when
$m$ is 4 or 16. Section \ref{cases} deals with these special cases.
The proof of theorem \ref{main} is in 
Section \ref{thm}.

\section{Solving Matrix Polynomial Equations}\label{solving}
Let $X$ satisfy
\begin{equation}\label{general}
X^n+A_{n-1}X^{n-1}+\dots+A_1X+A_0=0
\end{equation}
where $X,A_{n-1},\dots,A_0\in M_2(\C)$.
Since $X$ is a matrix over the complex
numbers, $X$ must have an eigenvalue, $\lambda$, and a
corresponding eigenvector, $v$. Then we have that
\begin{align*}
0&=(X^n+A_{n-1}X^{n-1}+\dots+A_1X+A_0)v\\
&=X^nv+A_{n-1}X^{n-1}v+\dots+A_1Xv+A_0v\\
&=\lambda^n v+A_{n-1}\lambda^{n-1}v+\dots+A_1\lambda v+A_0 v\\
&=(\lambda^nI+\lambda^{n-1}A_{n-1}+\dots+\lambda A_1+A_0)v
\end{align*}
meaning that the matrix 
$M(\lambda)=\lambda^nI+\lambda^{n-1}A_{n-1}+\dots+\lambda A_1+A_0$ must have a 
non-trivial nullspace. 
We shall call $M(t)$ the {\em corresponding polynomial matrix}. 
In order for $M(\lambda)$ to have a non-trivial nullspace, we need
\begin{equation}
\det(M(\lambda))=0.
\end{equation}
Thus we need to solve the equation $\det(M(t))=0$ to get the possible
eigenvalues. Since $\det(M(t))$ is a $2n$th degree polynomial, it 
has $2n$ roots, counting multiplicities. 
We call these roots, $\lambda_1,\dots,\lambda_{2n}$, the {\em critical values}
of the equation since they are the only possibilities for eigenvalues
of the solutions. 

If we assume that $\lambda$ is an eigenvalue of $X$, then it must have
an eigenvector, $v$. This vector can be any non-zero 
vector in the nullspace
of $M(\lambda)$. We call the nullspace of $M(\lambda)$ the 
{\em $\lambda$-critical space} and any nonzero 
vector in it a {\em $\lambda$-critical vector}.

Using this, we can now prove proposition \ref{inf}.

\begin{prop}\label{inf}
If (\ref{general}) has finitely many solutions, then there are at most 
$\binom{2n}{2}$ of them.
\end{prop}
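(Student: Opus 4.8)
The plan is to attach to each solution $X$ of (\ref{general}) a monic quadratic factor of $\det(M(t))$ in such a way that, when (\ref{general}) has only finitely many solutions, distinct solutions receive distinct factors. Since $\det(M(t))$ has degree $2n$, it has at most $\binom{2n}{2}$ monic quadratic factors, and the bound follows. Throughout I use that every solution $X$ has all of its eigenvalues among the critical values $\lambda_1,\dots,\lambda_{2n}$, and I organize the argument by the Jordan type of $X$: either $X$ has two distinct eigenvalues $\alpha\ne\beta$, or $X=\alpha I$, or $X$ is a single Jordan block with eigenvalue $\alpha$.

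Defining the assignment is routine in the first case — send $X$ to $(t-\alpha)(t-\beta)$, which divides $\det(M(t))$ because $\alpha,\beta$ are roots of it. In the remaining cases I want to send $X$ to $(t-\alpha)^2$, so I must check that $\alpha$ is at least a double root of $\det(M(t))$. If $X=\alpha I$ then $M(\alpha)=f(\alpha I)=0$, so $(t-\alpha)$ divides $M(t)$ entrywise and $(t-\alpha)^2\mid\det(M(t))$. If $X$ is a single Jordan block with $Xv=\alpha v$ and $Xw=\alpha w+v$, then $X^kw=\alpha^k w+k\alpha^{k-1}v$, so $f(X)w=0$ becomes $M(\alpha)w=-M'(\alpha)v$, while $M(\alpha)v=0$. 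Here $M(\alpha)$ has rank $\le 1$; the solvability of $M(\alpha)w=-M'(\alpha)v$ says precisely that $M'(\alpha)v$ lies in the one‑dimensional image of $M(\alpha)$, and a short $2\times2$ computation — using Jacobi's formula $\left.\frac{d}{dt}\det(M(t))\right|_{\alpha}=\operatorname{tr}\!\big(\operatorname{adj}(M(\alpha))\,M'(\alpha)\big)$ together with the explicit form of $\operatorname{adj}$ of a singular $2\times2$ matrix — shows this condition is equivalent to $\operatorname{tr}(\operatorname{adj}(M(\alpha))M'(\alpha))=0$, i.e.\ to $\alpha$ being a double root of $\det(M(t))$. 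This is the crux of the proof: it is exactly what keeps the count from overshooting, since a priori each critical value could seem to contribute both many "paired" solutions and a Jordan‑block solution.

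To see the assignment is injective in the finite case, note first that if $M(\lambda_i)=0$ for some critical value $\lambda_i$ while another critical value is also present, one gets a one‑parameter family of solutions by letting $X$ act as $\lambda_i$ on an arbitrary line and as a second critical value on a complementary critical line; this contradicts finiteness, and the residual degenerate configurations ($M$ vanishing at the only critical value, and in particular $n=1$) are dispatched directly. Hence in the finite case every $\ker M(\lambda_i)$ is exactly one‑dimensional and no $M(\lambda_i)$ is $0$. Consequently two solutions carrying the same $(t-\alpha)(t-\beta)$ with $\alpha\ne\beta$ are both diagonalizable with $\alpha$‑eigenspace equal to $\ker M(\alpha)$ and $\beta$‑eigenspace equal to $\ker M(\beta)$, hence are equal; and $(t-\alpha)^2$ can only come from a single Jordan block (as $M(\alpha)=0$ is excluded), whose eigenvector spans $\ker M(\alpha)$ and whose generalized eigenvector is determined modulo $\ker M(\alpha)$, so that solution too is unique. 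Finally, writing $\det(M(t))=\prod_{k=1}^r (t-\mu_k)^{e_k}$ with the $\mu_k$ distinct, the number of monic quadratic factors is $\binom{r}{2}+\#\{k: e_k\ge2\}$; since $\sum_k e_k=2n$ forces $\#\{k:e_k\ge2\}\le 2n-r$ and $r\le 2n$, the monotonicity of $r\mapsto r^2-3r$ on $r\ge 2$ (with the case $r=1$ checked separately) gives $\binom{r}{2}+(2n-r)\le\binom{2n}{2}$. Combined with the injectivity of the assignment, this yields at most $\binom{2n}{2}$ solutions.
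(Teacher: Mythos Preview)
Your argument is correct and is a genuinely different (and in some ways cleaner) packaging than the paper's. The paper proceeds by splitting solutions into diagonalizable and non-diagonalizable and bounding each pile separately: for the non-diagonalizable ones it uses the explicit factorization $M(t)=\Big(\sum_{i=1}^{n}\big(\sum_{j=0}^{i-1}A_{n-i+j+1}X^j\big)t^{n-i}\Big)(tI-X)$ to see that the characteristic polynomial of any solution divides $\det(M(t))$ (so a Jordan block forces a repeated critical value), and then shows by an algebraic trick with nilpotents---reducing to $A_1Y+A_0=A_1Z+A_0$, proving $YZ-ZY$ is nilpotent, and deducing that $\mu Y+(1-\mu)Z$ is a one-parameter family of nilpotent solutions---that two distinct Jordan blocks with the same eigenvalue already yield infinitely many solutions. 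Your approach replaces both of these steps: you obtain the double-root condition via Jacobi's formula and the rank-one structure of $\operatorname{adj}(M(\alpha))$, and you get uniqueness of the Jordan block directly from the fact that $v$ is pinned down by the one-dimensional kernel and $w$ is pinned down modulo $v$ by the linear equation $M(\alpha)w=-M'(\alpha)v$. The paper's factorization is more portable (it works for $k\times k$ matrices and immediately gives that the characteristic polynomial of any solution divides $\det(M(t))$), whereas your Jacobi/adjugate computation and the Jordan-vector uniqueness are crisper but lean on the $2\times2$ setting. The final counting is the same in both: with $r$ distinct critical values and $s$ of them repeated, one has at most $\binom{r}{2}+s\le\binom{r}{2}+(2n-r)\le\binom{2n}{2}$ solutions.
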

\begin{proof}
The idea is that 
if there are $p$ distinct critical values then there are at most 
$\binom{p}{2}$ diagonalizable solutions of (\ref{general});
for each repeated root of $M(t)$ 
there is at most one non-diagonalizable solution; 
and
non-repeated roots do not have associated non-diagonalizable solutions.
Then the number of non-diagonalizable solutions is at most $2n-p$, so the total 
number of solutions is at most $\binom{p}{2}+2n-p$. Since $p\leq 2n$, we have
\begin{align*}
  \binom{p}{2} + 2n - p = 2n +\frac{p(p-3)}{2}\leq 
  2n + \frac{2n(2n-3)}{2} = \binom{2n}{2}.
\end{align*}

We will now show that there are at most $\binom{p}{2}$
diagonalizable solutions.
A diagonalizable solution is formed by taking
two distinct critical values and their corresponding critical vectors,
and using them to define a matrix. If there are $p$ 
distinct critical values, then there are at most $\binom{p}{2}$
diagonalizable solutions. 

We discount the case in which a critical value, $\lambda$, has a critical
space with dimension greater than one. This is because in this case we can
create infinitely many solutions by taking a critical value different from 
$\lambda$ and its critical space together with $\lambda$ and any of the
one-dimensional subspaces of the $\lambda$-critical space. (If there is no
critical value distinct from $\lambda$ then there is only one-diagonalizable
solution anyway.)

We will now show that if a critical value, $\lambda$, is not repeated,
then there is no non-diagonalizable solution with $\lambda$ as its eigenvalue.
This is Theorem VIII.4 in \cite{G}. If $X$ is a non-diagonalizable solution to (\ref{general}) with eigenvalue 
$\lambda$, then the characteristic polynomial of $X$ has $\lambda$
as a double root. However, $\det(M(t))$ is a multiple of the characteristic
polynomial of $X$. Explicitly:
\begin{eqnarray*}
M(t)&=&\sum_{i=1}^{n}
       \left(\sum_{j=0}^{i-1} A_{n-i+j+1}X^j\right)t^{n-i}\cdot(tI-X)\\
\det(M(t))&=&\det\left(\sum_{i=1}^{n}
             \left(\sum_{j=0}^{i-1} A_{n-i+j+1}X^j\right)t^{n-i}\right)
           \cdot\det(tI-X)
\end{eqnarray*}
where $A_n=I$.
Thus if $X$ is non-diagonalizable, then its eigenvalue must be a repeated
critical value of $M(t)$.

We now show that each repeated critical value has at most 1 non-diagonalizable 
solution associated with it.

 Let $f(X) = X^n + a_{n-1}X^{n-1} + ... + A_0.$  Assume $Y$ and $Z$ are distinct nilpotent solutions of (\ref{general}). Then, since
$Y$ and $Z$ are two by two matrices, $Y^2 = Z^2 = 0$, 
and so $0 = f(Y) = A_1Y + A_0 = f(Z) = A_1Z + A_0.$ We will show that (\ref{general}) 
has infinitely many solutions.  If $A_1 = 0$ we have $0 = A_0$.  Then any solution 
of $X^2 = 0$ is a solution of (\ref{general}) and so there are infinitely many solutions.  Now 
assume $A_1 \ne 0$.  We have $A_1Y + A_0 =  A_1Z + A_0$.  Multiplying on the right by $Z$ gives $A_1YZ + A_0Z = A_1Z^2 + A_0Z$ so that $A_1YZ = 0$.  Similarly, $A_1ZY = 0$ 
and so $A_1(YZ - ZY) = 0$.  Now $tr(YZ - ZY) = 0 $ and since $A_1 \ne 0$ we have $det(YZ - ZY) = 0$.  Thus $YZ-ZY$ is nilpotent and so  $0 = (YZ - ZY)^2 = YZYZ + ZYZY.$  But $(Y+Z)^4 = YZYZ + ZYZY$ so $Y+Z$ is nilpotent and hence $0 = (Y+Z)^2 = YZ + ZY.$  Then $\mu Y + (1- \mu)Z$ is a nilpotent solution of (\ref{general}) for any $\mu \in \mathbb{C}.$

Now if $Y$ is a non-diagonalizable $2$ by $2$ matrix, it has a single eigenvalue $\lambda$.  If $Y,Z$ are two distinct non-diagonalizable solutions of (1) with eigenvalue $\lambda$, then $Y - \lambda I, Z - \lambda I$
are distinct nilpotent solutions of $(X + \lambda I)^2 + A_1(X + \lambda I) + A_0 = 0$ so this equation and hence equation (\ref{general}) have infinitely many solutions.

Thus, if (1) has finitely many solutions, the number of non-diagonalizable solutions of (1) is bounded by the number of repeated roots, say $k$, of $M(t) = 0$.  Since the number of distinct roots is $\le 2n-k$, the number of diagonalizable solutions is $\le {{2n-k} \choose 2}$ and the total number of solutions is $\le {{2n-k} \choose 2} + k \le {{2n} \choose 2}.$

\end{proof}

\section{Finding Equations with Exactly $m$ Solutions}\label{proof}
In this section, we will show how to find equations
with exactly $m$ solutions, all of which are
diagonalizable.

The crux of being able to find our equation with
exactly $m$ solutions is choosing
the critical vectors for the equation to have. We want to
choose the vectors in such a way that exactly $m$ pairs of
these vectors are linearly independent. However, it is not
always possible to find a set of $2n$ vectors with this
property.

The first subsection shows a way to create an equation
that has only $p$ distinct critical values ($p\leq2n$) 
but does not have any 
two-dimensional critical space or non-diagonalizable 
solutions. The second subsection shows how to choose the
configuration of the $p$ critical vectors such that 
exactly $m$ pairs are linearly 
independent. The third subsection shows how to choose the 
critical values and vectors such that a polynomial with
the desired properties exists.
\subsection{Cutting down the number of critical values}
Since we may not always want $2n$ distinct critical values, 
this lemma shows a way to make an equation with only $p$ 
distinct critical values by making one of the critical
values have multiplicity $\bar{p}=2n-p+1$ while keeping
the critical space at dimension 1 and not creating any
non-diagonalizable solutions. How to choose $p$ and $\bar{p}$
is discussed in lemma \ref{vectors}.

Let $A_i = \left[\begin{array}{rr}
a_{11}^{(i)} & a_{12}^{(i)} \\
a_{21}^{(i)} & a_{22}^{(i)}\end{array}\right]$. 

\begin{lemma}\label{values} 
Let $n, \bar{p}\in\N$, $2n\geq \bar{p}$.
Assume that either
\begin{enumerate}
\item $\bar{p}\leq n$, and 
$a_{11}^{(i)}=a_{21}^{(i)}=0$ for $0\leq i\leq \bar{p}-1$\nonumber
\item $\bar{p}>n$, and 
$a_{11}^{(i)}=a_{21}^{(i)}=0$ for $0\leq i\leq n-1$ and 
$a_{22}^{(i)}=0$ for $0\leq i\leq \bar{p}-n-1$
\end{enumerate}
Then 0 is a critical value of equation (\ref{general})
with multiplicity $\geq\bar{p}$ and 
$\left[\begin{array}{r}1\\0\end{array}\right]$ 
is a 0-critical vector of (\ref{general}).

Furthermore, if condition 1 holds and 
the multiplicity of 0 is exactly $\bar{p}$, or if condition 2 holds and 
$a_{12}^{(0)} \neq 0$ then
the 0-critical space of (\ref{general}) is exactly 
$\textrm{span}\left\{\left[\begin{array}{r}1\\0\end{array}\right]\right\}$,
and there are no nilpotent solutions.
\end{lemma}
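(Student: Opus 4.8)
The plan is to analyze the corresponding polynomial matrix $M(t) = t^n I + t^{n-1}A_{n-1} + \cdots + tA_1 + A_0$ directly under the two sets of hypotheses, tracking the powers of $t$ that divide each entry. First I would write $M(t)$ entrywise: its $(1,1)$ entry is $t^n + \sum_{i=0}^{n-1} a_{11}^{(i)} t^i$, its $(2,1)$ entry is $\sum_{i=0}^{n-1} a_{21}^{(i)} t^i$, its $(1,2)$ entry is $\sum_{i=0}^{n-1} a_{12}^{(i)} t^i$, and its $(2,2)$ entry is $t^n + \sum_{i=0}^{n-1} a_{22}^{(i)} t^i$. Under condition 1, the hypothesis $a_{11}^{(i)} = a_{21}^{(i)} = 0$ for $0 \le i \le \bar p - 1$ forces $t^{\bar p}$ to divide both the $(1,1)$ and $(2,1)$ entries (using $\bar p \le n$ so that the leading $t^n$ term in the $(1,1)$ slot also has exponent $\ge \bar p$). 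Expanding $\det M(t)$ along the first column then shows $t^{\bar p}$ divides $\det M(t)$, so $0$ is a critical value of multiplicity $\ge \bar p$. The claim that $[1,0]^T$ is a $0$-critical vector is immediate: $M(0) = A_0$, and under condition 1 the first column of $A_0$ is zero (since $a_{11}^{(0)} = a_{21}^{(0)} = 0$), so $A_0 [1,0]^T = 0$.

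For condition 2, where $\bar p > n$, I would argue that $t^n$ divides the $(1,1)$ and $(2,1)$ entries (from $a_{11}^{(i)} = a_{21}^{(i)} = 0$ for $0 \le i \le n-1$, together with the leading $t^n$ in the $(1,1)$ slot), while $t^{\bar p - n}$ divides the $(2,2)$ entry (from $a_{22}^{(i)} = 0$ for $0 \le i \le \bar p - n - 1$). Expanding the determinant as (1,1)-entry times (2,2)-entry minus (1,2)-entry times (2,1)-entry, the first product is divisible by $t^n \cdot t^{\bar p - n} = t^{\bar p}$ and the second by $t^{\bar p - n} \cdot t^n = t^{\bar p}$ as well — wait, I must be careful: the $(1,2)$ entry need not be divisible by anything, so I should pair the $t^n$ from the $(2,1)$ entry with the fact that... actually the second product is $(\text{(1,2) entry}) \cdot (\text{(2,1) entry})$, and $t^n \mid (\text{(2,1) entry})$, while $t^{\bar p - n} \mid (\text{(2,2) entry})$ handles the first product giving $t^{\bar p}$; for the second product I only get $t^n$, but $\bar p \le 2n$ so this is not yet enough. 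The fix is to observe that $(\text{(2,1) entry})$ is divisible by $t^n$ and in fact the determinant expansion $\det M(t) = (\text{(1,1)})(\text{(2,2)}) - (\text{(1,2)})(\text{(2,1)})$ has both terms divisible by $t^{\bar p - n}$ from the respective second-column or $(2,2)$ factors — more cleanly, I would factor $t^{\bar p - n}$ out of the second row of $M(t)$ after checking the $(2,1)$ entry is divisible by $t^n \ge t^{\bar p - n}$ and the $(2,2)$ entry by $t^{\bar p - n}$, so $\det M(t) = t^{\bar p - n} \det(\text{modified matrix})$, and then factor $t^n$ from the first column of the modified matrix. That gives $t^{\bar p} \mid \det M(t)$. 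Again $M(0) = A_0$ has first column zero, so $[1,0]^T$ is a $0$-critical vector.

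For the "furthermore" part I would show the $0$-critical space is exactly one-dimensional, i.e. $\operatorname{rank} M(0) = 1$, equivalently $\operatorname{rank} A_0 = 1$. Under condition 1 with multiplicity of $0$ exactly $\bar p$: I would derive, from the determinant computation, that the coefficient of $t^{\bar p}$ in $\det M(t)$ is (up to sign) a product or combination that is nonzero precisely when the relevant next-order entries don't all vanish, and use that to conclude $a_{12}^{(0)} \ne 0$ or the appropriate non-vanishing, forcing $A_0 = \begin{bmatrix} 0 & a_{12}^{(0)} \\ 0 & a_{22}^{(0)} \end{bmatrix}$ with $a_{12}^{(0)} \ne 0$, hence rank exactly $1$. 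Under condition 2 the hypothesis $a_{12}^{(0)} \ne 0$ is given outright, and $A_0 = \begin{bmatrix} 0 & a_{12}^{(0)} \\ 0 & 0 \end{bmatrix}$ (since $a_{22}^{(0)} = 0$ as $\bar p - n - 1 \ge 0$), which has rank exactly $1$. Finally, for "no nilpotent solutions": a nilpotent $2\times 2$ solution $Y$ satisfies $Y^2 = 0$, so from $f(Y) = 0$ we get $A_1 Y + A_0 = 0$, hence $A_0 = -A_1 Y$; since $Y$ is nilpotent and nonzero (a zero solution would force $A_0 = 0$, contradicting rank $1$), $\operatorname{rank} Y = 1$ with image equal to $\ker Y = \operatorname{span}\{[1,0]^T\}$ forced by... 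I would show $Y$'s column space must lie in the column space of $A_0 = \operatorname{span}\{[0,1]^T\text{ or }[a_{12},a_{22}]^T\}$ from $A_0 = -A_1 Y$, while $Y$ nilpotent with the structural constraints (its eigenvalue is $0$, which is consistent, but the image/kernel alignment clashes with the zero first column of $A_0$) yields a contradiction; concretely $A_1 Y$ has zero first column iff $Y$ has first column in $\ker A_1$, and a short case analysis on the structure of $A_1$ coming from the hypotheses closes it. The main obstacle I anticipate is precisely this last step — ruling out nilpotent solutions requires carefully using the coefficient hypotheses on $A_1$ (not just $A_0$), and in condition 1 also the sharpness "multiplicity exactly $\bar p$," so I would organize it by first pinning down the exact shape of $A_1$ forced by the vanishing conditions and then showing $A_1 Y + A_0 = 0$ with $Y^2 = 0$ has no solution.
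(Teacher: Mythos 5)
The first half of your proposal (multiplicity $\geq\bar p$ and $[1,0]^T$ being a $0$-critical vector) is correct and essentially the paper's computation, though in condition 2 your row/column factoring is more roundabout than necessary: there the $(2,1)$ entry of $M(t)$ is $\sum_{i=0}^{n-1}a_{21}^{(i)}t^i$, which the hypotheses make identically zero, so $M(t)$ is upper triangular and $\det M(t)=t^n\bigl(t^n+a_{22}^{(n-1)}t^{n-1}+\cdots+a_{22}^{(\bar p-n)}t^{\bar p-n}\bigr)$ at once. One imprecision to fix in the ``exactly span'' step: in condition 1 the sharpness hypothesis buys you only that the $t^{\bar p}$-coefficient of $\det M(t)$, a $2\times2$ determinant whose second column is $\bigl[a_{12}^{(0)},a_{22}^{(0)}\bigr]^T$, is nonzero, hence that $a_{12}^{(0)}$ and $a_{22}^{(0)}$ are \emph{not both} zero --- not that $a_{12}^{(0)}\neq0$ as you assert. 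That weaker statement still gives $\operatorname{rank}M(0)=1$, so the critical-space claim survives, but you must carry the correct form of the non-vanishing (``second column of $A_0$ nonzero'') into the last step.

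The genuine gap is the ``no nilpotent solutions'' step, which you leave open and where the partial reasoning you do offer points the wrong way. From $A_0=-A_1Y$ one cannot conclude that $Y$'s column space lies in the column space of $A_0$ (what follows is $\operatorname{col}(A_0)\subseteq\operatorname{col}(A_1)$), and inspecting the \emph{first} column of $A_1Y$ yields nothing: once $Y$'s kernel is forced into the $0$-critical space, $Y$'s first column is zero, so $A_1Y$ automatically has zero first column, matching the zero first column of $A_0$ --- no contradiction there. The missing chain, which is what the paper runs, is: (i) a nilpotent solution $X$ cannot be $0$ (else $A_0=f(0)=0$, contradicting the nonzero second column), so $X$ is non-diagonalizable, its $0$-eigenvector lies in the $0$-critical space, hence $Xv=0$ for $v=[1,0]^T$ and there exists $w$ with $Xw=v$; (ii) $X^2=0$ collapses $f(X)w=0$ to $A_1Xw+A_0w=A_1v+A_0w=0$, and the coefficient hypotheses force the \emph{first column of $A_1$} to vanish, i.e.\ $A_1v=0$; (iii) then $A_0w=0$ reads $a_{12}^{(0)}w_2=a_{22}^{(0)}w_2=0$, and since the second column of $A_0$ is nonzero, $w_2=0$, so $w\in\operatorname{span}\{v\}=\ker X$ and $Xw=0\neq v$, a contradiction. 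In short, the contradiction lives in the \emph{second} column of $A_1Y+A_0$, driven by ``first column of $A_1$ is zero'' together with ``second column of $A_0$ is nonzero''; your sketch identifies neither ingredient, so as written it does not close.
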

\begin{proof}\qquad
\paragraph{Case 1:} $\bar{p}\leq n$

The polynomial matrix, $M(t)$, is 
\begin{displaymath}
\left[\begin{array}{rr}
t^n + a_{11}^{(n-1)}t^{n-1} + \dots + a_{11}^{(\bar{p})}t^{\bar{p}} & 
      a_{12}^{(n-1)}t^{n-1} + \dots + a_{12}^{(0)} \\
      a_{21}^{(n-1)}t^{n-1} + \dots + a_{21}^{(\bar{p})}t^{\bar{p}} & 
t^n + a_{22}^{(n-1)}t^{n-1} + \dots + a_{22}^{(0)} 
\end{array}\right].
\end{displaymath}
The determinant of this is
\begin{displaymath}
t^{\bar{p}}\cdot det
\left[\begin{array}{rr}
t^{n-\bar{p}} + a_{11}^{(n-1)}t^{n-\bar{p}-1} + \dots + a_{11}^{(\bar{p})} & 
           a_{12}^{(n-1)}t^{n-1}     + \dots + a_{12}^{(0)} \\
           a_{21}^{(n-1)}t^{n-\bar{p}-1} + \dots + a_{21}^{(\bar{p})} & 
t^{n}    + a_{22}^{(n-1)}t^{n-1} + \dots + a_{22}^{(0)} 
\end{array}\right].
\end{displaymath}
Thus $0$ is an critical value with multiplicity at least $\bar{p}$. 

\paragraph{Case 2:} $\bar{p}>n$

The polynomial matrix, $M(t)$, is
\begin{displaymath}
\left[\begin{array}{rr}
t^n &       a_{12}^{(n-1)}t^{n-1} + \dots + a_{12}^{(0)} \\
0   & t^n + a_{22}^{(n-1)}t^{n-1} + \dots + a_{22}^{(\bar{p}-n)}t^{\bar{p}-n} 
\end{array}\right].
\end{displaymath}
As this matrix is upper triangular, its determinant is
$t^{2n}+a_{22}^{(n-1)}t^{2n-1}+\ldots + a_{22}^{(\bar{p}-n)}t^{\bar{p}}$.
Thus $0$ is an critical value with multiplicity at least $\bar{p}$.

Note that in the first case, the multiplicity of 0 is exactly $\bar{p}$
if and only if  
$\det\left(\left[\begin{array}{rr}
a_{11}^{(\bar{p})} & a_{12}^{(0)} \\
a_{21}^{(\bar{p})} & a_{22}^{(0)}
\end{array}\right]\right)\neq0$. In particular, this means that 
$\left[\begin{array}{r}a_{12}^{(0)}\\a_{22}^{(0)}\end{array}\right]\neq0$.
In the second case, we required that $a_{12}^{(0)}\neq 0$, so it is also 
true here that 
$\left[\begin{array}{r}a_{12}^{(0)}\\a_{22}^{(0)}\end{array}\right]\neq0$

To find the 0-critical space, consider 
$M(0)=\left[\begin{array}{rr} 0&a_{12}^{(0)}\\0&a_{22}^{(0)}\end{array}\right]$.
We can see that $v=\left[\begin{array}{r}1\\0\end{array}\right]
\in\textrm{null}(M(0))$, so $\left[\begin{array}{r}1\\0\end{array}\right]$ 
is a 0-critical vector. 
If the multiplicity of 0 is exactly $\bar{p}$
then $a_{12}^{(0)}$ and $a_{22}^{(0)}$ are not both $0$ so
the 0-critical space is exactly 
$\textrm{span}\{v\}$.

To show that when the multiplicity of 0 is exactly $\bar{p}$ there are no
nilpotent solutions,
assume $X$ is a nilpotent solution to (\ref{general}), 
$X=\left[\begin{array}{rr}x_{11} & x_{12} 
\\ x_{21} & x_{22}\end{array}\right]$. 
Since $X$ has eigenvalue $0$ with eigenvector 
$v=\left[\begin{array}{r}1\\0\end{array}\right]$, 
$X$ must have the form 
$\left[\begin{array}{rr}0 & x_{12} \\ 0 & x_{22}\end{array}\right]$. 
Since $X$ is non-diagonalizable, there exists a vector 
$w= \left[\begin{array}{r}w_1\\w_2\end{array}\right]$
such that 
$Xw=\left[\begin{array}{r}1\\0\end{array}\right]$.
Since $X^2=0$, we get 
\begin{eqnarray*}
0&=&X^nw+A_{n-1}X^{n-1}w+\dots+A_1Xw+A_0w\\
 &=&A_1Xw+A_0w\\
 &=& \left[\begin{array} {rr} 0 & a_{12}^{(1)} \\ 
0 & a_{22}^{(1)} \end{array}\right]Xw
+
\left[\begin{array} {rr} 0 & a_{12}^{(0)} \\ 
0 & a_{22}^{(0)} \end{array}\right]
w\\
&=&\left[\begin{array} {rr} 0 & a_{12}^{(1)} \\ 
0 & a_{22}^{(1)} \end{array}\right]v
+
\left[\begin{array} {rr} 0 & a_{12}^{(0)} \\ 
0 & a_{22}^{(0)} \end{array}\right]
w\\
&=&\left[\begin{array}{r}0\\0\end{array}\right]
+
\left[\begin{array} {r} a_{12}^{(0)}w_2 \\ a_{22}^{(0)}w_2 \end{array}\right].
\end{eqnarray*}
Since $a_{12}^{(0)}$ and $a_{22}^{(0)}$ are not both $0$, $w_2=0$, 
so $w=\left[\begin{array}{r}w_1\\0\end{array}\right]\in\textrm{span}\{v\}$.
Thus 
$\left[\begin{array}{r}1\\0\end{array}\right]
=Xw=\left[\begin{array}{r}0\\0\end{array}\right]$, a contradiction.
\end{proof}

\subsection{Cutting down the number of linearly independent pairs of critical vectors}
This lemma shows how many of the 
critical values should be distinct, and how 
to construct a configuration of vectors
such that exactly $m$ pairs are linearly independent. 

Recall that the critical value $0$ plays a distinguished role as
it may have multiplicity $>1$.

\begin{lemma}\label{vectors} % \ref{vectors}
For any $m\in\N$, $m\neq 4,16$, let $p$ be the integer such that 
\begin{equation}\label{p}
{p-1 \choose 2}<m\leq{p \choose 2}.
\end{equation}
There exists an equivalence relation $\sim$ on $\mathbb{N}_p$, where 
$\mathbb{N}_p=\{0,1,2,\dots,p-1\}$,
such that 
\begin{enumerate}
\item the set 
$\{(x,y)\in\mathbb{N}_p\times\mathbb{N}_p | x<y\text{, and } x\not \sim y\}$
has exactly $m$ elements
\item $0$ is in an equivalence class by itself.
\item No equivalence class has more than $\lceil\frac{p}{2}\rceil$ elements
\end{enumerate}
\end{lemma}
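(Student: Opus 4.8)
The plan is to restate the problem numerically. If the classes of $\sim$ have sizes $s_1,\dots,s_k$ (so $\sum_i s_i=p$), then the number of pairs $x<y$ lying in \emph{different} classes equals $\binom p2-\sum_i\binom{s_i}2$. Put $N:=\binom p2-m$; the first requirement then says $\sum_i\binom{s_i}2=N$, and (\ref{p}) gives $0\le N\le\binom p2-\binom{p-1}2-1=p-2$. The requirement that $0$ be alone forces $\{0\}$ to be a block, which contributes $0$ to both sums, and singletons likewise contribute nothing; so what must be produced is a family of blocks of sizes $\ge2$, each at most $\lceil p/2\rceil$, with total size $\le p-1$ and $\sum\binom s2=N$, after which one pads with singletons to use up the remaining $p-1$ points.

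I would build this family from the quantity $D:=2N-p+2$; note $D\le p-2$, and $N\ge D$ because $N\le p-2$. If $D\le1$ (equivalently $2N\le p-1$), take $N$ blocks of size $2$: then $\sum\binom s2=N$ and every block has size $2\le\lceil p/2\rceil$. If $D\ge2$, set $t:=\lceil\sqrt D\,\rceil$ and $a:=t+1$, and use one block of size $a$ together with $j:=N-\binom a2$ blocks of size $2$. Then $\sum\binom s2=\binom a2+j=N$; and since $a+2j=2N+1-(a-1)^2$, the number of points used is $\le p-1$ exactly when $(a-1)^2\ge D$, which holds because $(a-1)^2=t^2\ge D$.

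It remains to verify $j\ge0$, i.e.\ $\binom a2\le N$, and $a\le\lceil p/2\rceil$. For the latter, $D\le p-2$ yields $(\lceil p/2\rceil-1)^2\ge p-2\ge D$ whenever $p\ge6$ (this is $(p-2)(p-6)\ge0$), so $a=1+\lceil\sqrt D\,\rceil\le\lceil p/2\rceil$. For the former, since $N\ge D$ it suffices that $\binom{t+1}2\le D$, and as $D\ge(t-1)^2+1$ this reduces to $\binom{t+1}2\le(t-1)^2+1$, i.e.\ $(t-1)(t-4)\ge0$, which holds for $t\ge4$; separately one checks that the cases $t=2$ (so $a=3$, needing $N\ge3$) and $t=3$ (so $a=4$, needing $N\ge6$) also go through, and altogether the construction succeeds for every $p\ge9$. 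The part I expect to be fiddly is the finite check $p\le8$: there the estimates are tight and the exceptions surface. When $m=4$ one is forced to $p=4$, $N=2$, $D=2$, $a=3>\lceil4/2\rceil$, and no admissible partition of $\{0,1,2,3\}$ exists; when $m=16$ one is forced to $p=7$, $N=5$, $D=5$, $a=4$ with $\binom42=6>5=N$, and again none exists. Every remaining case with $p\le8$ is checked directly---the parity identity $D\equiv p\pmod2$ and the bound $D\le p-2$ limit sharply what can occur---and the second and third requirements hold by construction, since $0$ is isolated and no block has size exceeding $\max\{2,a\}\le\lceil p/2\rceil$.
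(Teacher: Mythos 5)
Your construction is correct, but it is genuinely different from the paper's. You reduce the lemma to a purely numerical question: realize $N=\binom{p}{2}-m$ (which satisfies $0\le N\le p-2$) as $\sum_i\binom{s_i}{2}$ with blocks of size $\ge 2$ occupying at most $p-1$ of the points $\{1,\dots,p-1\}$, and you do this with one large block of size $a=1+\lceil\sqrt{D}\,\rceil$, where $D=2N-p+2$, plus pairs --- the square-root estimates $(a-1)^2\ge D$, $\binom{a}{2}\le(t-1)^2+1$ for $t\ge 4$, and $(\lceil p/2\rceil-1)^2\ge p-2$ then give the point count, $j\ge 0$, and condition 3. The paper instead writes $\binom{p}{2}-m=3a+b$ and uses $a$ triples plus a remainder block (a pair, two pairs, or one quadruple depending on $b$), so its classes never exceed size $4$ and condition 3 is almost automatic, at the price of a case analysis on $b$ and on when the remainder blocks fit; your classes can be large, so you pay with the ceiling-square-root inequality instead. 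Both routes isolate exactly the same obstructions: your failures $t=2,\ N=2$ (forcing $p=4$, $m=4$) and $t=3,\ N=5$ (forcing $p=7$, $m=16$) are precisely the paper's uncovered cases $b=2$, $a<2$, $p\le 3a+4$. One remark: your hedge that the construction is only proved for $p\ge 9$ and that $p\le 8$ needs a separate direct check is more caution than is needed --- your own inequalities already cover all $p$ except where $t\in\{2,3\}$ falls below the threshold $N\ge\binom{a}{2}$ or where $p$ is even and $<6$ in the $a\le\lceil p/2\rceil$ bound, and you showed those situations force $m\in\{4,16\}$, which the hypothesis excludes; so the finite verification you defer is routine and in fact already subsumed by what you wrote, though for completeness it should be stated that way rather than left as an expectation.
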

\begin{proof}
We will define $\sim$ to satisfy 1 and 2 in each of several cases.
We will defer the verification that $\sim$ also satisfies 3 until 
the end of the proof.

When we define $\sim$, we want the number of pairs 
that are not equivalent to be $m$,
so we want the number of pairs that are equivalent to be
 ${p \choose 2}-m$.
Let $a$ and $b$ be the integers such 
that ${p \choose 2}-m=3a+b$ where $0\leq b<2$.

Now, by (\ref{p}), 
\begin{eqnarray}
{p-1 \choose 2} &<& m\nonumber \\
1-p={p-1 \choose 2}-{p \choose 2} &<& m-{p \choose 2}\nonumber\\
3a+b+1={p \choose 2}-m+1 &<& p\label{pineq}
\end{eqnarray}
so $3a+b+1\in\N_p$.

\paragraph{Case 1:} $b=0$\\
Let $\sim$ be the equivalence relation corresponding 
to the following partition of $\N_p$
{\setlength\arraycolsep{0pt}
\begin{eqnarray*}
\{&\{&0\},\\
  &\{&1,2,3\},\{4,5,6\},\dots,\{3a-2, 3a-1, 3a\},\\
  &\{&3a+1\},\{3a+2\},\dots,\{p-1\}\}.
\end{eqnarray*}}
Note that since $3a+b+1\in\N_p$, this is a partition of $\N_p$.
Since each set of three creates 3 equivalent pairs, and there are $a$
groups of three, there are $3a={p \choose 2}-m$ pairs that are
equivalent and $m$ pairs are not.

\paragraph{Case 2:} $b=1$\\
Let $\sim$ be the equivalence relation corresponding 
to the following partition of $\N_p${\setlength\arraycolsep{0pt}
\begin{eqnarray*}
\{&\{&0\},\\
  &\{&1,2,3\},\{4,5,6\},\dots,\{3a-2, 3a-1, 3a\},\\
  &\{&3a+1,3a+2\}\\
  &\{&3a+3\},\{3a+4\},\dots,\{p-1\}\}.
\end{eqnarray*}}
Note that since $3a+b+1\in\N_p$, this is a partition of $\N_p$.
Each set of three creates 3 equivalent pairs, and the set of two 
creates 1 equivalent pair. There are $a$
sets of three, and 1 set of two so there are $3a+1={p \choose 2}-m$ 
pairs that are equivalent and $m$ pairs are not.

\paragraph{Case 3a:} $b=2$ and $a \geq 2$\\
Let $\sim$ be the equivalence relation corresponding 
to the following partition of $\N_p$
{\setlength\arraycolsep{0pt}
\begin{eqnarray*}
\{&\{&0\},\\
  &\{&1,2,3\},\{4,5,6\},\dots,\{3a-8, 3a-7, 3a-6\},\\
  &\{&3a-5, 3a-4, 3a-3, 3a-2\},\\
  &\{&3a-1,3a\},\{3a+1,3a+2\},\\
  &\{&3a+3\},\{3a+4\},\dots,\{p-1\}\}.
\end{eqnarray*}}

Each set of three creates 3 equivalent pairs, 
each set of two creates 1 equivalent pair, 
and the set of four creates 6 equivalent pairs.
Thus there are $3(a-2)+6+2=3a+2={p \choose 2}-m$ pairs that are equivalent 
and $m$ pairs that are not.

\paragraph{Case 3b:} $b=2$ and $p>3a+4$
Let $\sim$ be the equivalence relation corresponding 
to the following partition of $\N_p$
{\setlength\arraycolsep{0pt}
\begin{eqnarray*}
\{&\{&0\},\\
  &\{&1,2,3\},\{4,5,6\},\dots,\{3a-2, 3a-1,3a\},\\
  &\{&3a+1,3a+2\},\{3a+3,3a+4\},\\
  &\{&3a+5\},\{3a+6\},\dots,\{p-1\}\}.
\end{eqnarray*}}

Each set of three creates 3 equivalent pairs,
each set of two creates 1 equivalent pair,
Thus there are $3a+2={p \choose 2}-m$ pairs that are equivalent and $m$ pairs
that are not.

Note that the two parts of the $b=2$ case are not 
mutually exclusive, and they do
not cover all the $b=2$ cases. The cases that are not covered are the ones
in which $b=2$, $a<2$, and $p\leq 3a+4$.

If $b=2$, $a=0$ and $p\leq 3a+4$,then $p\leq 4$. But by (\ref{pineq}),
$3=3a+b+1<p$ so $p=4$ and hence $m=\binom{4}{2}-2=4$. Thus $m=4$ is
one of our special cases.

If $b=2$, $a=1$, and $p\leq 3a+4$, then $p\leq 7$. But by (\ref{pineq}),
$6=3a+b+1<p$, so $p=7$ and hence $m=\binom{7}{2}-5=16$
So $m=16$ is our other special case.

Our construction of the equivalence relations satisfied the first 
two conditions. We have yet to prove that no equivalence class
has more than $\lceil\frac{p}{2}\rceil$ elements.

First of all, notice that none of our constructed equivalence classes
have more than $4$ elements, so if $p\geq 7$, then we are done.

The only case in which we have more than $3$ elements in one equivalence
class is when $b=2$ and $a\geq2$. But then, by (\ref{pineq}), 
\begin{equation*}
9\leq 3a+b+1 < p.
\end{equation*}
Thus case 3a does not occur when $p<7$, so when $p\leq 9$,
 we will not have any equivalence classes with more than 
3 elements. Thus if $p\geq 5$, we are done.

If $p\leq4$, then by (\ref{pineq}), $3a+b+1<p \leq 4$ so $3a+b\leq2$
and $a=0$. However, if $a=0$ then none of the other cases require
an equivalence class of three elements. Thus if $p\leq4$, all equivalence
classes have at most two elements, so for $p\geq3$, we are done.

If $p\leq2$, then $m=\binom{p}{2}$, so $a=b=0$. This falls into case
1 with none of the elements being equivalent to anything other than 
itself. 

Now we have satisfied all three conditions, so except 
for our special cases ($m=16$ and $m=4$), we can create the 
required equivalence relation.
\end{proof}

\subsection{Choosing the numbers}
The previous lemma ensured that no equivalence class has more
than $\lceil\frac{p}{2}\rceil$ elements. Since $n,p\in\N$ and
$p \leq 2n$, this means that no equivalence class has more 
than $n$ elements.

\begin{lemma}\label{entries}
Given
\begin{enumerate}
\item $n\in\N$
\item $p\in\N$, $2 \leq p \leq 2n$
\item an equivalence relation, $\sim$ on $\N_p$, 
such that 0
is only equivalent to itself, and no equivalence class has more
than $n$ elements
\end{enumerate}
it is possible to choose 2-dimensional vectors 
$v_0, v_1, \dots, v_{p-1}$ and scalars
$\lambda_0, \lambda_1, \dots, \lambda_{p-1}$ such that 
\begin{enumerate}
\item $v_i$ and $v_j$ are linearly dependent if and only if $i\sim j$
\item there exists an $n$th degree 
equation over 2 by 2 matrices that has critical values
$\lambda_0, \lambda_1, \dots, \lambda_{p-1}$, 
and the critical space corresponding
to $\lambda_i$ is $span(v_i)$.
\item the equation has no non-diagonalizable solutions.
\end{enumerate}
\end{lemma}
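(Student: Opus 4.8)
The plan is to build the desired equation by reverse-engineering it from a prescribed set of critical values and critical vectors. First I would choose the scalars $\lambda_0,\ldots,\lambda_{p-1}$: since the only constraint from the equivalence relation is about linear (in)dependence of vectors, the $\lambda_i$ can be taken to be any distinct complex numbers, with $\lambda_0=0$ playing the distinguished role so that it can absorb the multiplicity $\bar p=2n-p+1$ needed to bring the total count of critical values (with multiplicity) up to $2n$. Then I would choose the vectors $v_i\in\C^2$ so that $v_i$ and $v_j$ are parallel exactly when $i\sim j$: this is possible precisely because condition 3 bounds every equivalence class by $n$ — each distinct direction can be used by at most $n$ indices, and there are at most $p\le 2n$ directions' worth of data to place, but more importantly one simply assigns one line through the origin in $\C^2$ to each equivalence class (there are infinitely many such lines, so distinct classes get distinct lines) and lets $v_i$ be a nonzero vector on the line of $i$'s class; property 1 is then immediate.

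The heart of the argument is property 2: producing an $n$th degree monic matrix polynomial $f(X)=X^n+A_{n-1}X^{n-1}+\cdots+A_0$ whose corresponding polynomial matrix $M(t)$ has determinant $\prod_{i}(t-\lambda_i)$ with the right multiplicities and whose $\lambda_i$-critical space is exactly $\mathrm{span}(v_i)$. I would exploit the freedom in the matrix coefficients: for each $i$ we need $M(\lambda_i)v_i=0$, i.e.\ $v_i\in\ker M(\lambda_i)$, together with a rank condition ensuring the kernel is no bigger. Writing $M(t)=t^nI+t^{n-1}A_{n-1}+\cdots+A_0$, the conditions $M(\lambda_i)v_i=0$ are linear in the unknown entries of $A_0,\ldots,A_{n-1}$; there are $2p$ scalar equations (two per critical value) against $4n$ unknowns, and since $p\le 2n$ we have $2p\le 4n$, so generically the system is solvable. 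To also force $0$ to be a root of $\det M(t)$ of multiplicity $\bar p$ with a one-dimensional critical space, I would invoke Lemma~\ref{values}: arrange the coefficients $A_0,\ldots,A_{\bar p-1}$ (or the appropriate entries thereof) to satisfy the vanishing hypotheses of that lemma — which is compatible with the remaining linear conditions because the distinguished vector $[1,0]^{T}$ can be taken as $v_0$ — so that $0$ automatically has the correct multiplicity and critical space and, crucially, no nilpotent solutions arise.

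After the coefficients are pinned down, property 3 (no non-diagonalizable solutions) follows from the machinery of Section~\ref{solving}: a non-diagonalizable solution must have a \emph{repeated} critical value as its eigenvalue, and the only repeated critical value here is $0$, for which Lemma~\ref{values} already rules out nilpotent (hence non-diagonalizable with eigenvalue $0$) solutions. So the remaining work is purely to verify that the linear system for the $A_i$ is consistent and that a generic solution of it yields $\det M(t)$ with \emph{simple} roots at the $\lambda_i$ for $i\ge 1$ (so that each such $\lambda_i$ contributes a one-dimensional critical space exactly equal to $\mathrm{span}(v_i)$), and that the total degree and leading behavior of $\det M(t)$ is $t^{2n}+\cdots$ as required. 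I expect the main obstacle to be bookkeeping: showing simultaneously that (a) the linear constraints $M(\lambda_i)v_i=0$ for $i=1,\ldots,p-1$, (b) the structural vanishing conditions from Lemma~\ref{values} at $t=0$, and (c) a non-degeneracy condition making all the non-zero critical values simple, can all be met by one choice of coefficient matrices. The cleanest route is probably to exhibit an explicit construction — e.g.\ take $M(t)$ block-structured or built from a companion-like form adapted to the chosen eigenvalue/eigenvector data — rather than a dimension count, so that the multiplicities of $\det M(t)$ can be read off directly; I would present that explicit construction and then check the three conclusions one at a time.
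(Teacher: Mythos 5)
Your overall architecture matches the paper's: prescribe the critical vectors class by class, put $\lambda_0=0$ with multiplicity $\bar p=2n-p+1$ via Lemma \ref{values}, solve a linear system for the remaining coefficient entries so that $M(\lambda_i)v_i=0$, and rule out non-diagonalizable solutions because the nonzero critical values are simple and Lemma \ref{values} kills nilpotent solutions. But the decisive step is exactly the one you leave open. Asserting that the system $M(\lambda_i)v_i=0$ is ``generically solvable'' because $2p\le 4n$ is not an argument: the system is inhomogeneous (the monic leading term puts $-\lambda_i^n v_i$ on the right-hand side), it must be solved simultaneously with the structural vanishing conditions of Lemma \ref{values} (which remove unknowns, so that after imposing them the system for each row of the $A_k$'s is in fact square of size $p-1$), and consistency then amounts to the nonvanishing of a specific determinant, which a count of equations versus unknowns cannot deliver. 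You acknowledge this and defer to an ``explicit construction,'' but that construction is precisely the content of the lemma and is never given.

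The telltale sign is that your proposal never uses the hypothesis that no equivalence class has more than $n$ elements — you even note that assigning one line per class needs no such bound. In the paper that hypothesis is what makes the linear algebra go through: one takes $v_0=[1,0]^T$, $v_i=[1,y_i]^T$, and chooses $\lambda_i$ to be an $n$th root of $y_i$, with all $\lambda_i$ distinct — possible exactly because each nonzero $y$ has $n$ distinct $n$th roots and at most $n$ indices share a given $y_i$. The resulting identity $y_i=\lambda_i^n$ collapses the coefficient matrix of the system (whose rows would otherwise be $(\lambda_i^{\bar p},\dots,\lambda_i^{n-1},y_i,\lambda_i y_i,\dots,\lambda_i^{n-1}y_i)$) into the matrix with rows $(\lambda_i^{\bar p},\dots,\lambda_i^{2n-1})$, a scaled Vandermonde matrix with explicitly nonzero determinant, so the system is solvable outright, not merely generically; a separate easy variant handles $\bar p>n$. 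By decoupling the $\lambda_i$ from the $y_i$ you lose this, and you would have to prove nonsingularity of the general mixed matrix (or argue genericity carefully while respecting the vanishing constraints), which you do not do. A small additional remark: the simplicity of the nonzero critical values, which you list as something still to verify, is automatic by degree counting once $0$ has multiplicity at least $\bar p$ and you exhibit $p-1$ further distinct roots, since $\bar p+(p-1)=2n$.
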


\begin{proof}\quad
\paragraph{Choosing the critical vectors}
Let
$v_0 = \left[\begin{array}{r} 1\\0 \end{array}\right]$
and let all the others be of the form
$v_i = \left[\begin{array}{r} 1\\y_i \end{array}\right]$
where $y_i\neq0$ and $v_i=v_j\Leftrightarrow i\sim j$.

\paragraph{Choosing the critical values}
Let $\lambda_0=0$ and
let $\lambda_i$ be an $n$th root of $y_i$.
Furthermore, take the $\lambda_i$ to be distinct.
This is possible because there are at most $n$ numbers in the same
equivalence class, so no more than $n$ of the $y_i$ will be the same. 

\paragraph{Tying it together and proving that it works}
Let $\bar{p}=2n-p+1$. If $\bar{p}\leq n$, we use case 1 of
lemma \ref{values} and our equation will have a corresponding
polynomial matrix of the form
\begin{equation*}
\left[\begin{array}{rr}
t^n + a_{11}^{(n-1)}t^{n-1} + \dots + a_{11}^{(\bar{p})}t^{\bar{p}} & 
      a_{12}^{(n-1)}t^{n-1} + \dots + a_{12}^{(0)} \\
      a_{21}^{(n-1)}t^{n-1} + \dots + a_{21}^{(\bar{p})}t^{\bar{p}} & 
t^n + a_{22}^{(n-1)}t^{n-1} + \dots + a_{22}^{(0)} 
\end{array}\right].
\end{equation*}
We need to choose the $a_{ij}^{(k)}$
so that when we plug in $\lambda_i$ for $t$, we 
get a matrix with a null space spanned by $v_i$.
First let us focus on finding the $a_{1j}^{(k)}$, i.e., on finding the 
first row of each $A_k$. (Finding the $a_{2j}^{(k)}$ is similar.)
Since the first component of 
$M(\lambda_i)v_i$ must be 0, we obtain the equation
\begin{equation*}
a_{11}^{(\bar{p})}\lambda_i^{\bar{p}} + \dots +
a_{11}^{(n-1)}\lambda_i^{n-1} +  
a_{12}^{(0)}y_i +\dots +
a_{12}^{(n-1)}\lambda_i^{n-1}y_i=-\lambda_i^n.
\end{equation*}
The corresponding system of $p-1$ linear equations (in the $a_{ij}^{(k)}$)
may be written in the following matrix form

\begin{displaymath}
\left[\begin{array}{cccccccc}
\lambda_1^{\bar{p}} & \lambda_1^{\bar{p}+1} & \dots 
       & \lambda_1^{n-1} 
& y_1 & \lambda_1 y_1 & \dots 
       & \lambda_1^{n-1}y_1 \\
\lambda_2^{\bar{p}} & \lambda_2^{\bar{p}+1} & \dots 
       & \lambda_2^{n-1} 
& y_2 & \lambda_2 y_2 & \dots 
       & \lambda_2^{n-1}y_2 \\
&&&&\vdots&&&\\
\lambda_{p-1}^{\bar{p}} & \lambda_{p-1}^{\bar{p}+1} & \dots 
       & \lambda_{p-1}^{n-1} 
& y_{p-1} & \lambda_{p-1} y_{p-1} & \dots 
       & \lambda_{p-1}^{n-1}y_{p-1} \\
\end{array}\right]
\left[\begin{array}{r}
a_{11}^{(\bar{p})} \\ a_{11}^{(\bar{p}+1)} \\ \vdots \\a_{11}^{(n-1)} \\
a_{12}^{(0)} \\ a_{12}^{(1)} \\ \vdots \\a_{12}^{(n-1)}
\end{array}\right]
=
\left[\begin{array}{r}
-\lambda_1^n \\ -\lambda_2^n \\ \vdots \\ -\lambda_{p-1}^n
\end{array}\right].
\end{displaymath}

(We do not need to include the $0$th equation 
because lemma \ref{values} ensures that $\lambda_0$ and $v_0$ will
be a critical value/critical vector pair.)
Because we chose each $\lambda_i$ to be an $n$th root of $y_i$, 
we have that $y_i=\lambda_i^n$. Thus this equation can be written

\begin{equation*}
\left[\begin{array}{rrrr}
\lambda_1^{\bar{p}} & \lambda_1^{\bar{p}+1} & \dots & \lambda_1^{2n-1}  \\
\lambda_2^{\bar{p}} & \lambda_2^{\bar{p}+1} & \dots & \lambda_2^{2n-1}  \\
&\vdots&&\\
\lambda_{p-1}^{\bar{p}}  & \lambda_{p-1}^{\bar{p}+1} 
& \dots & \lambda_{p-1}^{2n-1} 
\end{array}\right]
\left[\begin{array}{r}
a_{11}^{(\bar{p})} \\ a_{11}^{(\bar{p}+1)} \\ \vdots \\a_{11}^{(n-1)} \\
a_{12}^{(0)} \\ a_{12}^{(1)} \\ \vdots \\a_{12}^{(n-1)}
\end{array}\right]
=
\left[\begin{array}{r}
-\lambda_0^n \\ -\lambda_1^n \\ -\lambda_2^n \\ \vdots \\ -\lambda_{p-1}^n
\end{array}\right].
\end{equation*} 
Note that the coefficient matrix is square ($2n-\bar{p}=p-1$).
The determinant of this matrix is
\begin{eqnarray*}
&&\det\left(\left[\begin{array}{rrrr}
\lambda_1^{\bar{p}} & \lambda_1^{\bar{p}+1} & \dots & \lambda_1^{2n-1}  \\
\lambda_2^{\bar{p}} & \lambda_2^{\bar{p}+1} & \dots & \lambda_2^{2n-1}  \\
&\vdots&&\\
\lambda_{p-1}^{\bar{p}}  & \lambda_{p-1}^{\bar{p}+1} 
& \dots & \lambda_{p-1}^{2n-1} 
\end{array}\right]\right)\\
&=&
\prod_{i=1}^{p-1} \lambda_i^{\bar{p}}
\cdot
\det\left(\left[\begin{array}{rrrrr}
1 & \lambda_1 & \lambda_1^{2} & \dots & \lambda_1^{p-2}  \\
1 & \lambda_2 & \lambda_2^{2} & \dots & \lambda_2^{p-2}  \\
&\vdots&&\\
1 & \lambda_{p-1} & \lambda_{p-1}^{2} 
& \dots & \lambda_{p-1}^{p-2} 
\end{array}\right]\right)\\
&=&
\prod_{i=1}^{p-1} \lambda_i^{\bar{p}}
\cdot
\prod_{i=1}^{p-1} \prod_{j=i+1}^{p-1} (\lambda_j-\lambda_i)
\end{eqnarray*}
(with the last equality following from 
the formula for the determinant of a 
Vandermonde matrix). Since none of 
$\lambda_1,\dots,\lambda_{p-1}$ are zero, and all of our $\lambda_i$ are 
distinct, the determinant is non-zero 
so the system of equations is consistent.

If $\bar{p}>n$, then take $a_{11}^{(i)}=a_{21}^{(i)}=0$ 
for $0\leq i\leq n-1$, 
$a_{22}^{(i)}=0$ for $0\leq i\leq \bar{p}-n-1$, 
as specified by lemma \ref{values}. 
Also take $a_{12}^{(i)}=0$ for $p-1\leq i\leq n-1$. 
This does not conflict with the restriction that $a_{12}^{(0)}\neq 0$
because we required $2\leq p$.
The matrix polynomial looks like
\begin{equation*}
\left[\begin{array}{rr}
t^n &       a_{12}^{(p-2)}t^{p-2} + \dots + a_{12}^{(0)} \\
0   & t^n + a_{22}^{(n-1)}t^{n-1} + \dots + a_{22}^{(\bar{p}-n)}t^{\bar{p}-n} 
\end{array}\right].
\end{equation*}
To find the coefficients of the first row, we set up the equations
in the matrix form
\begin{equation*}
\left[\begin{array}{rrrrr}
y_1 & y_1 \lambda_1 & y_1\lambda_1^2 & \dots & \lambda_1^{p-2}y_1\\
y_2 & y_2 \lambda_2 & y_2\lambda_2^2 & \dots & \lambda_2^{p-2}y_2\\
&&\vdots&&\\
y_{p-1} & y_{p-1} \lambda_{p-1} 
& y_{p-1}\lambda_{p-1}^2 & \dots & \lambda_{p-1}^{p-2}y_{p-1}
\end{array}\right]
\left[\begin{array}{r}
a_{12}^{(0)} \\ a_{12}^{(1)} \\ \vdots \\ a_{12}^{(p-2)}
\end{array}\right]
=
\left[\begin{array}{r}
-\lambda_1^n \\ -\lambda_2^n \\ \vdots \\ -\lambda_{p-1}^n
\end{array}\right].
\end{equation*}

%% Ref's suggestion
Since $y_i = \lambda_i^n$ for $1\leq i\leq p-1$, we see that the first 
column of the coefficient matrix is the negative of the vector on the 
right-hand side of the equation. Thus
\begin{equation*}
  \left[\begin{array}{c}
      a_{12}^{(0)} \\a_{12}^{(1)}\\ \vdots \\a_{12}^{(p-2)}
  \end{array}\right]
  =
  \left[\begin{array}{c}
      -1\\0\\\vdots\\0
  \end{array}\right]
\end{equation*}
is a solution to the system of equations. Note that this shows
 $a_{12}^{(0)}\neq 0$ so the hypothesis of Lemma \ref{values} is satisfied.

To find the coefficients of the second row, we use the matrix equation
\begin{equation*}
\left[\begin{array}{rrr}
\lambda_1^{\bar{p}-n}y_1 & \dots & \lambda_1^{n-1}y_1  \\
\lambda_2^{\bar{p}-n}y_2 & \dots & \lambda_2^{n-1}y_2  \\
&\vdots&\\
\lambda_{p-1}^{\bar{p}-n}y_{p-1} & \dots & \lambda_{p-1}^{n-1} y_{p-1}
\end{array}\right]
\left[\begin{array}{r}
a_{22}^{(\bar{p}-n)} \\ \vdots \\a_{22}^{(n-1)}
\end{array}\right]
=
\left[\begin{array}{r}
-\lambda_1^ny_1 \\ -\lambda_2^ny_2 \\ \vdots \\ -\lambda_{p-1}^n y_{p-1}
\end{array}\right]
\end{equation*}

and the determinant of this matrix is

\begin{eqnarray*}
&&\det\left(\left[\begin{array}{rrr}
\lambda_1^{\bar{p}-n}y_1 & \dots & \lambda_1^{n-1}y_1  \\
\lambda_2^{\bar{p}-n}y_2 & \dots & \lambda_2^{n-1}y_2  \\
&\vdots&\\
\lambda_{p-1}^{\bar{p}-n} y_{p-1}& \dots & \lambda_{p-1}^{n-1} y_{p-1}
\end{array}\right]\right)\\
&=&\det\left(\left[\begin{array}{rrrr}
\lambda_1^{\bar{p}} & \lambda_1^{\bar{p}+1} & \dots & \lambda_1^{2n-1}  \\
\lambda_2^{\bar{p}} & \lambda_2^{\bar{p}+1} & \dots & \lambda_2^{2n-1}  \\
&\vdots&&\\
\lambda_{p-1}^{\bar{p}}  & \lambda_{p-1}^{\bar{p}+1} 
& \dots & \lambda_{p-1}^{2n-1} 
\end{array}\right]\right)\\
&=&
\prod_{i=1}^{p-1} \lambda_i^{\bar{p}}
\cdot
\det\left(\left[\begin{array}{rrrrr}
1 & \lambda_1 & \lambda_1^{2} & \dots & \lambda_1^{p-2}  \\
1 & \lambda_2 & \lambda_2^{2} & \dots & \lambda_2^{p-2}  \\
&\vdots&&\\
1 & \lambda_{p-1} & \lambda_{p-1}^{2} 
& \dots & \lambda_{p-1}^{p-2} 
\end{array}\right]\right)\\
&=&
\prod_{i=1}^{p-1} \lambda_i^{\bar{p}}
\cdot
\prod_{i=1}^{p-1} \prod_{j=i+1}^{p-1} (\lambda_i-\lambda_j).
\end{eqnarray*}

Since $\lambda_1,\dots\lambda_{p-1}$ are all distinct and 
non-zero, the determinant is non-zero so this system is solvable.

Because we chose $p-1$ non-zero distinct critical values for the
equation, 0 cannot have a multiplicity of greater 
than $2n-(p-1)=\bar{p}$. Lemma \ref{values} assures us that 0
will have multiplicity at least $\bar{p}$, so 0 must have 
multiplicity exactly $\bar{p}$. Then we can use the second
part of lemma \ref{values}. Thus the 0-critical space is 
1-dimensional, and (\ref{general}) has no nilpotent solutions.
\end{proof}
\section{Special Cases}\label{cases}
For \textbf{$m=4$}, we must have $4=m\leq\binom{2n}{2}$, 
so $n\geq 2$. Then we may use the equation whose corresponding 
polynomial matrix is 
\begin{equation*}
\left[\begin{array}{rr}
(t-1)(t+1)^{n-1} & 0\\
0 & (t-2)(t+2)^{n-1}
\end{array}\right].
\end{equation*}
To show that this has no non-diagonalizable solution with 
an eigenvalue $-1$ in general, use a change 
of variables $u=t+1$ so we are looking for a non-diagonalizable
solution with eigenvalue 0.  
The polynomial matrix becomes
\begin{equation*}
\left[\begin{array}{rr}
(u-2)u^{n-1} & 0\\
0 & (u-3)(u+1)^{n-1}
\end{array}\right]
\end{equation*}
the 0-critical space of this equation is
$\textrm{span}\left\{\left[\begin{array}{r}1\\0\end{array}\right]\right\}$
so $X$ must have the form 
$\left[\begin{array}{rr}0 & x_{12} \\ 0 & x_{22}\end{array}\right]$.
Since $X$ is nilpotent, $x_{22}=0$. 
Then we have that 
\begin{eqnarray*}
0&=&A_1 X +A_0\\
&=& \left[\begin{array}{rr}a_{11}^{(1)} & 0 \\ 0 & a_{22}^{(1)}\end{array}\right]
 \left[\begin{array}{rr}0 & x_{12} \\ 0 & 0\end{array}\right]+
 \left[\begin{array}{rr}0 & 0 \\ 0 & -3\end{array}\right]\\
&=&\left[\begin{array}{rr}0 & a_{11}^{(1)} x_{12} \\ 0 & -3\end{array}\right].
\end{eqnarray*}
This is a contradiction, thus there are no non-diagonalizable solutions.
The argument for why there is no non-diagonalizable solutions
with $-2$ as an eigenvalue is similar.

For \textbf{$m=16$}, we must have $16=m\leq\binom{2n}{2}$, so $n\geq4$.
Then we may use the equation whose corresponding
matrix polynomial is 
\begin{equation*}
\left[\begin{array}{rr}
(t-3)(t+3)(t-1)(t+1)^{n-3} & 0\\
0 & (t-4)(t+4)(t-2)(t+2)^{n-3}
\end{array}\right].
\end{equation*}
It is straightforward why there are exactly 16 diagonalizable 
solutions,
and the reason why there are no non-diagonalizable solutions
is the same as for when $m=4$.
\section{Conclusion}\label{thm}
This section proves the main result of this paper, theorem \ref{main}. 
We assume $m,n\in\N$, $m\leq \binom{2n}{2}$, 
and show that there exists an $n$th degree equation
over $2\times2$ complex matrices that has exactly $m$ solutions.

If $m \neq 4, 16$, use lemma (\ref{vectors}) and $m$ to 
define the number of distinct
critical values that we want our equation to have, $p$, and the 
equivalence relation, $\sim$, that we want to determine which 
critical spaces will be distinct. 

All the conditions of lemma (\ref{entries}) are satisfied 
by the conclusions of lemma (\ref{vectors}) except $p\leq 2n$. 
This condition is satisfied because
$\binom{p-1}{2} < m \leq \binom{2n}{2}$ so 
$p-1 < 2n$ so we get $p\leq 2n$ as desired. 
We can then use lemma (\ref{entries}) to produce our equation.

Since lemma (\ref{entries}) promises that we will not have any
non-diagonalizable solutions and that all our critical spaces
will be one-dimensional, all our solutions will have two
different eigenvalues. By our choice of $\sim$, we have exactly
$m$ ways to choose two critical value/critical vector pairs such
that the critical vectors are linearly independent. Thus we have exactly
$m$ solutions. 
The special cases of $m=4,16$ have been dealt with in section 
\ref{cases} so the proof is complete.

Address: \\
Rutgers University, Department of Mathematics \\
110 Frelinghuysen  Rd \\
Piscataway, NJ 08854\\
E-mail address: mslusky@gmail.com
\end{document}